 \newtheorem{theorem}{Theorem}
 \newtheorem{lemma}[theorem]{Lemma}
\theoremstyle{definition}
\theoremstyle{remark}
\begin{document}
\title[$d$-plane transform]{The symbol of the normal operator for the $d$-plane transform on the Euclidean space}
\author[H.~Chihara]{Hiroyuki Chihara}
\address{College of Education, University of the Ryukyus, Nishihara, Okinawa 903-0213, Japan}
\email{hc@trevally.net}
\thanks{Supported by the JSPS Grant-in-Aid for Scientific Research \#23K03186.}
\subjclass[2020]{Primary 58J40, Secondary 53C65}
\keywords{$d$-plane transform, Fourier integral operator, normal operator, filtered backprojection}
\begin{abstract}
We directly compute the symbol of the normal operator for the $d$-plane transform on the Euclidean space. We show that this symbol is the product of the symbol of the power of the Laplacian of order $-d/2$ and a constant given by an invariant integral over excess-dimensional spaces. This leads to an alternative derivation of the filtered backprojection formula for the $d$-plane transform.
\end{abstract}
\maketitle
%
%
The $d$-plane transform $\mathcal{R}_d$ on the $n$-dimensional Euclidean space is an elliptic Fourier integral operator, and the inversion formula $(-\Delta_{\mathbb{R}^n})\mathcal{R}_d^\ast{\mathcal{R}_d}=c\operatorname{Id}$ with some constant $c>0$, which is called the filtered backprojection formula, is well-known, where 
$$
x=(x_1,\dotsc,x_n)\in\mathbb{R}^n, 
\quad
\Delta_{\mathbb{R}^n}
=
\frac{\partial^2}{\partial x_1^2}
+
\dotsb
+
\frac{\partial^2}{\partial x_n^2}.
$$ 
In this paper we directly compute the symbol of the composition $\mathcal{R}_d^\ast{\mathcal{R}_d}$ of Fourier integral operators $\mathcal{R}_d^\ast$ and $\mathcal{R}_d$. Theoretically the symbol is supposed to be given by the integration of the product of the symbols of $\mathcal{R}_d^\ast$ and $\mathcal{R}_d$ over $e_d$-dimensional spaces, where $e_d$ is the excess of the intersection of the canonical relations of $\mathcal{R}_d^\ast$ and $\mathcal{R}_d$. See \cite[Theorem~25.2.3]{Hoermander4}.   
\par
We now introduce the $d$-plane transform and state the inversion formula following Helgason's celebrated textbook \cite{Siggi}. Let $n$ be the space dimension of the Euclidean space not smaller than two, and let $d$ be a positive integer strictly less than $n$. Denote by $G_{d,n}$ and $G(d,n)$ the Grassmannian and the affine Grassmannian respectively, that is, $G_{d,n}$ is the set of all $d$-dimensional vector subspaces of $\mathbb{R}^n$ and $G(d,n)$ is the set of all $d$-dimensional affine subspaces in $\mathbb{R}^n$. For any $\sigma \in G_{d,n}$, we have orthogonal direct sum 
$\mathbb{R}^n = \sigma\oplus\sigma^\perp$, 
where $\sigma^\perp$ is the orthogonal complement of $\sigma$ in $\mathbb{R}^n$. 
For any fixed $\sigma \in G_{d,n}$, 
we choose a coordinate system of $\mathbb{R}^n$ such as 
$x=x^\prime+x^{\prime\prime} \in \sigma\oplus\sigma^\perp$. 
The affine Grassmannian $G(d,n)$ is given by 
$$
G(d,n)
=
\{\sigma+x^{\prime\prime}: \sigma \in G_{d,n}, x^{\prime\prime} \in \sigma^\perp\}. 
$$    
Let $O(n)$ be the orthogonal group. 
If we interpret $O(d)$ as the orthogonal group acting on $\sigma$ 
and $O(n-d)$ as the orthogonal group acting on $\sigma^\perp$, 
then we can identify $G_{d,n}$ with the quotient space 
$O(n)/(O(d){\times}O(n-d))$. 
We express $\sigma+x^{\prime\prime}$ by $(\sigma,x^{\prime\prime})$ below. 
The $d$-plane transform $\mathcal{R}_df(\sigma,x^{\prime\prime})$ of 
an appropriate function $f(x)$ on $\mathbb{R}^n$ is defined by 
$$
\mathcal{R}_df(\sigma,x^{\prime\prime})
:= 
\int_\sigma f(x^\prime+x^{\prime\prime}) dx^\prime,
$$
where $dx^\prime$ is the Lebesgue measure on $\sigma$. 
We now compute to obtain the adjoint of $\mathcal{R}_d$. 
\begin{lemma}
\label{theorem:adjoint}
The adjoint $\mathcal{R}_d^\ast$ is explicitly given by  
\begin{equation}
\mathcal{R}_d^\ast\varphi(x)
=
\operatorname{vol}(G_{d,n})
\int_{O(n)/(O(d){\times}O(n-d))}
\varphi(k\cdot\sigma,x-\pi_{k{\cdot}\sigma}x)
dk, 
\label{equation:adjoint1}
\end{equation} 
for an appropriate function $\varphi$ on $G(d,n)$, where 
$\pi_{k{\cdot}\sigma} : \mathbb{R}^n \rightarrow k\cdot\sigma$ 
is the orthogonal projection, 
$dk$ is the normalized measure which is invariant under rotations, 
$\sigma \in G_{d,n}$ is arbitrary, 
$\operatorname{vol}(X)$ denotes the volume of a compact Riemannian manifold $X$, 
$\operatorname{vol}(G_{d,n})$ is given by 
\begin{align}
  \operatorname{vol}(G_{d,n})
& =
  \frac{\operatorname{vol}\bigl(SO(n)\bigr)}{2^{d(n-d)/2} \operatorname{vol}\bigl(SO(d)\bigr) \operatorname{vol}\bigl(SO(n-d)\bigr)}
\nonumber
\\
& =
\frac{\operatorname{vol}(\mathbb{S}^{n-1}) \dotsb \operatorname{vol}(\mathbb{S}^{n-d})}{\operatorname{vol}(\mathbb{S}^{d-1}) \dotsb \operatorname{vol}(\mathbb{S}^1)},
\label{equation:volume}
\end{align}
$\mathbb{S}^k$ is the unit sphere in $\mathbb{R}^{k+1}$ with the center at the origin, $\operatorname{vol}(\mathbb{S}^k)=2\pi^{(k+1)/2}/\Gamma\bigl((k+1)/2\bigr)$, 
and $\Gamma(\cdot)$ is the gamma function. 
\end{lemma}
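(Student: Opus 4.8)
The plan is to read \eqref{equation:adjoint1} off the pairing that defines $\mathcal{R}_d^\ast$ after a single orthogonal change of variables, and to compute $\operatorname{vol}(G_{d,n})$ from the standard fibrations relating $O(n)$, the Stiefel manifolds and $G_{d,n}$.

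\emph{The adjoint.} First I would equip $G(d,n)$ with the product measure $dx^{\prime\prime}\,d\sigma$, where $d\sigma$ is the $O(n)$-invariant Riemannian measure on $G_{d,n}$ of total mass $\operatorname{vol}(G_{d,n})$, and take $f$ Schwartz and $\varphi$ smooth with compact support, so that every integral below converges absolutely and Fubini applies. Then
\[
\langle\mathcal{R}_df,\varphi\rangle_{L^2(G(d,n))}
=
\int_{G_{d,n}}\int_{\sigma^\perp}\Bigl(\int_\sigma f(x^\prime+x^{\prime\prime})\,dx^\prime\Bigr)\overline{\varphi(\sigma,x^{\prime\prime})}\,dx^{\prime\prime}\,d\sigma .
\]
For each fixed $\sigma$ the map $\sigma\times\sigma^\perp\ni(x^\prime,x^{\prime\prime})\mapsto x^\prime+x^{\prime\prime}\in\mathbb{R}^n$ is a linear isometry, so it pushes $dx^\prime\,dx^{\prime\prime}$ forward to Lebesgue measure $dx$ and sends $x^{\prime\prime}$ to $x-\pi_\sigma x$; hence the inner double integral equals $\int_{\mathbb{R}^n}f(x)\,\overline{\varphi(\sigma,x-\pi_\sigma x)}\,dx$. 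Interchanging the order of integration identifies $\langle\mathcal{R}_df,\varphi\rangle$ with $\langle f,\mathcal{R}_d^\ast\varphi\rangle_{L^2(\mathbb{R}^n)}$ for $\mathcal{R}_d^\ast\varphi(x)=\int_{G_{d,n}}\varphi(\sigma,x-\pi_\sigma x)\,d\sigma$. Writing $d\sigma=\operatorname{vol}(G_{d,n})\,dk$ and pulling the integral over to $O(n)/(O(d)\times O(n-d))$ through $\sigma\mapsto k\cdot\sigma$ — legitimate since $dk$ is the normalized rotation-invariant measure and $\pi_{k\cdot\sigma}=k\,\pi_\sigma\,k^{-1}$, which incidentally shows the answer is independent of the base plane $\sigma$ — produces exactly \eqref{equation:adjoint1}.

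\emph{The volume.} I would assemble $\operatorname{vol}(G_{d,n})$ from two families of fibrations: the sphere fibrations $SO(k-1)\hookrightarrow SO(k)\to\mathbb{S}^{k-1}$, which express $\operatorname{vol}\bigl(SO(m)\bigr)$ as a product of the $\operatorname{vol}(\mathbb{S}^{k})$, and the principal $O(d)\times O(n-d)$-bundle $O(n)\to G_{d,n}$. Parametrizing the horizontal tangent space of $G_{d,n}$ by the off-diagonal skew-symmetric blocks $\left(\begin{smallmatrix}0&A\\-A^{\mathsf T}&0\end{smallmatrix}\right)$, $A\in\mathbb{R}^{d\times(n-d)}$, whose Frobenius norm is $\sqrt2\,\lVert A\rVert$, together with $\dim G_{d,n}=d(n-d)$, one sees that the discrepancy between this and the unit normalization of the metric is responsible for the factor $2^{d(n-d)/2}$ appearing in \eqref{equation:volume}; this gives the first equality. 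Substituting $\operatorname{vol}\bigl(SO(m)\bigr)=\prod_{k=1}^{m-1}\operatorname{vol}(\mathbb{S}^{k})$ (the attendant powers of two cancelling against $2^{d(n-d)/2}$) and telescoping — the factors $\operatorname{vol}(\mathbb{S}^{1}),\dots,\operatorname{vol}(\mathbb{S}^{n-d-1})$ drop out against the denominator, leaving $\operatorname{vol}(\mathbb{S}^{n-1})\cdots\operatorname{vol}(\mathbb{S}^{n-d})$ over $\operatorname{vol}(\mathbb{S}^{d-1})\cdots\operatorname{vol}(\mathbb{S}^{1})$ — gives the second equality. The same formula is recorded in Helgason~\cite{Siggi}, which can serve as a cross-check.

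\emph{Main obstacle.} The analytic content above is routine; the only genuinely delicate point is the bookkeeping of the normalization constants in the volume computation — keeping the Riemannian metrics on $O(n)$, on the fibres $O(d)\times O(n-d)$ and on $G_{d,n}$ mutually consistent and correctly pinning down the power of two. I would guard against factor errors by verifying the smallest case $n=2$, $d=1$, in which $G_{1,2}$ is a projective line and \eqref{equation:volume} must reproduce its length.
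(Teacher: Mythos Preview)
Your argument for the adjoint is essentially the paper's own: both compute the $L^2$ pairing, use the orthogonal splitting $\mathbb{R}^n=\sigma\oplus\sigma^\perp$ to convert $dx'\,dx''$ into $dx$ with $x''=x-\pi_\sigma x$, and then pass from the invariant measure on $G_{d,n}$ to the normalized measure $dk$ on $O(n)/(O(d)\times O(n-d))$, picking up the factor $\operatorname{vol}(G_{d,n})$. For the volume formula \eqref{equation:volume} you actually go further than the paper, which simply quotes the result from Abe--Yokota \cite{AbeYokota} and Shi--Zhou \cite{ShiZhou}; your fibration sketch (with the $2^{d(n-d)/2}$ tracked via the Frobenius normalization and the telescoping of the sphere volumes) is correct, though the reference for a cross-check should be \cite{AbeYokota,ShiZhou} rather than \cite{Siggi}.
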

\begin{proof}
The formula \eqref{equation:volume} for $\operatorname{vol}(G_{d,n})$ 
is given by Abe and Yokota in \cite{AbeYokota} and Shi and Zhou in \cite[Proposition~2.2]{ShiZhou}. 
Let $f(x)$ be a compactly supported continuous function on $\mathbb{R}^n$, 
and let $\varphi(\sigma,x^{\prime\prime})$ be a compactly supported continuous function on $G(d,n)$. Pick up arbitrary $\sigma \in G_{d,n}$. 
Set $\mathcal{G}_d=O(n)/(O(d){\times}O(n-d))$ for short. 
By using the fact $\mathbb{R}^n=(k\cdot\sigma)\oplus(k\cdot\sigma)^\perp$ 
for any $k \in \mathcal{G}_d$, we deduce that 
\begin{align*}
& \int_{G(d,n)}\varphi(\sigma,x^{\prime\prime})\overline{\mathcal{R}_df(\sigma,x^{\prime\prime})}
\\
& =
  \operatorname{vol}(G_{d,n})
  \int_{\mathcal{G}_d}
  \int_{(k\cdot\sigma)^\perp}
  \varphi(k\cdot\sigma,x^{\prime\prime})
  \overline{\mathcal{R}_df(k\cdot\sigma,x^{\prime\prime})}
  dx^{\prime\prime}
  dk
\\
& =
  \operatorname{vol}(G_{d,n})
  \int_{\mathcal{G}_d}
  \int_{(k\cdot\sigma)^\perp}
  \int_{k\cdot\sigma}
  \varphi(k\cdot\sigma,x^{\prime\prime})
  \overline{f(x^\prime+x^{\prime\prime})}
  dx^\prime
  dx^{\prime\prime}
  dk
\\
& =
  \operatorname{vol}(G_{d,n})
  \int_{\mathbb{R}^n}
  \int_{\mathcal{G}_d}
  \varphi(k\cdot\sigma,x-\pi_{k\cdot\sigma})
  \overline{f(x)}
  dk
  dx
\\
& =
  \int_{\mathbb{R}^n}
  \left(
  \operatorname{vol}(G_{d,n})
  \int_{\mathcal{G}_d}
  \varphi(k\cdot\sigma,x-\pi_{k\cdot\sigma})
  dk
  \right)
  \overline{f(x)}
  dx, 
\end{align*}
which shows \eqref{equation:adjoint1}.  
\end{proof}
The inversion formula for $\mathcal{R}_d$ is given as follows.
\begin{theorem}[{\cite[Theorem~6.2]{Siggi}}]
\label{theorem:siggi}
We have 
\begin{equation}
f=\frac{1}{\operatorname{vol}(G_{d,n})}(-\Delta_{\mathbb{R}^n})^{d/2}\mathcal{R}_d^\ast{\mathcal{R}_d}f 
\label{equation:inversion}
\end{equation}
for $f(x)=\mathcal{O}(\langle{x}\rangle^{-d-\varepsilon})$ on $\mathbb{R}^n$, 
where $\langle{x}\rangle=\sqrt{1+x_1^2+\dotsb+x_n^2}$.
\end{theorem}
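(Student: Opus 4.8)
\emph{Proof proposal.} The plan is to show that the normal operator $\mathcal{R}_d^\ast\mathcal{R}_d$ is, as a Fourier multiplier, a constant multiple of the symbol of $(-\Delta_{\mathbb{R}^n})^{-d/2}$, and that the constant is exactly $\operatorname{vol}(G_{d,n})$; then \eqref{equation:inversion} follows at once by applying $(-\Delta_{\mathbb{R}^n})^{d/2}$. First I would compose the two operators: inserting the definition of $\mathcal{R}_d$ into the adjoint formula \eqref{equation:adjoint1} of Lemma~\ref{theorem:adjoint}, and using $x-\pi_{k\cdot\sigma}x=\pi_{(k\cdot\sigma)^\perp}x$ together with the translation invariance of Lebesgue measure on $k\cdot\sigma$, one obtains
\[
\mathcal{R}_d^\ast\mathcal{R}_d f(x)
=
\operatorname{vol}(G_{d,n})
\int_{\mathcal{G}_d}
\Bigl(\int_{k\cdot\sigma}f(x+z^\prime)\,dz^\prime\Bigr)\,dk,
\]
i.e.\ $\operatorname{vol}(G_{d,n})$ times the rotation average, over all $d$-dimensional subspaces $\tau\subset\mathbb{R}^n$, of the integral of $f$ over the affine plane $x+\tau$. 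Equivalently $\mathcal{R}_d^\ast\mathcal{R}_d$ is convolution with the tempered distribution $K=\operatorname{vol}(G_{d,n})\int_{\mathcal{G}_d}\mu_{k\cdot\sigma}\,dk$, where $\mu_\tau$ denotes Lebesgue measure on the subspace $\tau$ regarded as a measure on $\mathbb{R}^n$. The decay hypothesis $f=\mathcal{O}(\langle x\rangle^{-d-\varepsilon})$ is exactly what makes the integral defining $\mathcal{R}_df$, the integrals above, and the Fubini interchanges absolutely convergent.

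Each $\mu_\tau$ is homogeneous of degree $-(n-d)$ under dilations of $\mathbb{R}^n$, and averaging over the rotation-invariant family $\{k\cdot\sigma:k\in\mathcal{G}_d\}$ preserves rotation invariance; hence $K$ is rotation invariant and homogeneous of degree $d-n$ on $\mathbb{R}^n\setminus\{0\}$, and since $0<n-d<n$ there is no distribution supported at the origin of that degree, so $K=c_{n,d}\lvert x\rvert^{d-n}$ for a constant $c_{n,d}$. On the Fourier side $\mathcal{R}_d^\ast\mathcal{R}_d$ is then the Fourier multiplier by $\widehat{K}(\xi)=c^\prime_{n,d}\lvert\xi\rvert^{-d}$, which is precisely a constant times the symbol of $(-\Delta_{\mathbb{R}^n})^{-d/2}$. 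It remains to evaluate the constant, which I would do by testing both sides on a single convenient function, say the Gaussian $f(x)=e^{-\lvert x\rvert^2/2}$ at $x=0$: from the formula above $\mathcal{R}_d^\ast\mathcal{R}_d f(0)=\operatorname{vol}(G_{d,n})(2\pi)^{d/2}$, because $\int_\tau e^{-\lvert z\rvert^2/2}\,dz=(2\pi)^{d/2}$ for every $d$-plane $\tau$ and $\int_{\mathcal{G}_d}dk=1$, whereas $(-\Delta_{\mathbb{R}^n})^{-d/2}f(0)$ is an elementary Gamma-function integral; comparing the two and substituting the sphere-volume formula \eqref{equation:volume} for $\operatorname{vol}(G_{d,n})$ identifies the constant, in the normalization of $(-\Delta_{\mathbb{R}^n})^{d/2}$ of \cite{Siggi}, as $\operatorname{vol}(G_{d,n})$.

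Along the lines indicated in the introduction, the same constant also arises from the Fourier-integral-operator calculus: $\mathcal{R}_d^\ast\circ\mathcal{R}_d$ is a clean composition of elliptic Fourier integral operators whose excess is $e_d=d(n-d-1)=\dim G_{d,n-1}$ (the fiber over $(x,\xi)\in T^\ast\mathbb{R}^n$ of the canonical relation of $\mathcal{R}_d$ being the Grassmannian of $d$-planes contained in $\xi^\perp$), so by \cite[Theorem~25.2.3]{Hoermander4} the principal symbol of the composition is the integral over these $e_d$-dimensional fibers of the product of the symbols of $\mathcal{R}_d^\ast$ and $\mathcal{R}_d$, and evaluating this invariant integral reproduces $\operatorname{vol}(G_{d,n})$ times the symbol of $(-\Delta_{\mathbb{R}^n})^{-d/2}$. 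The geometric core — that averaging Lebesgue measures over the $d$-planes yields a Riesz kernel — is forced by homogeneity; the one genuinely delicate point, and where I expect the real work to be, is the bookkeeping of normalization constants: reconciling the Fourier-transform convention, the precise definition of $(-\Delta_{\mathbb{R}^n})^{d/2}$, the invariant measure on $G(d,n)$ implicit in Lemma~\ref{theorem:adjoint}, and (in the symbol-calculus approach) the half-density factors, so that the common value of the constant comes out to be $\operatorname{vol}(G_{d,n})$ as in \eqref{equation:volume}. Having obtained $\mathcal{R}_d^\ast\mathcal{R}_d=\operatorname{vol}(G_{d,n})(-\Delta_{\mathbb{R}^n})^{-d/2}$ for Schwartz $f$, I would extend it to $f=\mathcal{O}(\langle x\rangle^{-d-\varepsilon})$ by a density/continuity argument (the Riesz potential of such an $f$ converges absolutely, and $(-\Delta_{\mathbb{R}^n})^{d/2}$ on the right is read, following \cite{Siggi}, as the inverse Riesz potential), and then apply $(-\Delta_{\mathbb{R}^n})^{d/2}$ to both sides to conclude \eqref{equation:inversion}.
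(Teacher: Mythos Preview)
The paper does not supply its own proof of Theorem~\ref{theorem:siggi}; it is quoted from \cite{Siggi}. The paper's alternative derivation is the proof of Theorem~\ref{theorem:main}, and that is what your proposal should be compared against. There the computation stays entirely on the Fourier side: starting from Lemma~\ref{theorem:adjoint} one inserts the Fourier inversion formula for $f$, integrates the plane variables $t_1,\dots,t_d\in\mathbb{R}$ first to produce $\bigotimes_{j}\delta_0\bigl((k\omega_j)\cdot\xi/\lvert\xi\rvert\bigr)$, and this collapses the $dk$-integral over $\mathcal{G}_d$ to the sub-Grassmannian $\mathcal{G}_d\cap\xi^\perp\simeq O(n-1)/\bigl(O(d)\times O(n-1-d)\bigr)$, giving the symbol $\operatorname{vol}(G_{d,n})/\lvert\xi\rvert^d$ directly as the promised $e_d$-dimensional integral \eqref{equation:cgamma}. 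No physical-space homogeneity argument, no Riesz-kernel identification, and no test function are used.

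Your primary route is different and is essentially the classical argument in \cite{Siggi}: recognize $\mathcal{R}_d^\ast\mathcal{R}_d$ as convolution with a rotation-invariant kernel homogeneous of degree $d-n$, hence a constant times $\lvert x\rvert^{d-n}$, pass to the Fourier side to get a multiple of $\lvert\xi\rvert^{-d}$, and pin down the constant by evaluating on a Gaussian. This is correct and conceptually clean, but it does not exhibit the symbol as an integral over the excess fiber, which is the whole point of the paper. Your third paragraph does anticipate the paper's mechanism (clean composition with excess $e_d=d(n-d-1)=\dim G_{d,n-1}$, fiber over $(x,\xi)$ equal to the Grassmannian of $d$-planes in $\xi^\perp$), but only in outline; the paper's contribution is precisely to carry that fiber integral out explicitly via the delta-function reduction above, bypassing both the Riesz-kernel step and the Gaussian constant-matching that you rightly flag as the delicate bookkeeping.
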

We can see the results of Theorem~\ref{theorem:siggi} as that the normal operator 
$\mathcal{R}_d^\ast{\mathcal{R}_d}$ is an elliptic pseudodifferential operator $(-\Delta_{\mathbb{R}^n})^{-d/2}$, that is,
$$
\mathcal{R}_d^\ast{\mathcal{R}_d}f(x)
=
\frac{1}{(2\pi)^n}
\int_{\mathbb{R}^n}
e^{ix\cdot\xi}
\frac{\operatorname{vol}(G_{d,n})}{\lvert\xi\rvert^d}
\hat{f}(\xi)
d\xi, 
$$
where $\hat{f}(\xi)$ is the Fourier transform of $f$ defined by 
$$
\hat{f}(\xi)
=
\int_{\mathbb{R}^n}
e^{-iy\cdot\xi}
f(y)
dy.
$$
According to the theory of the composition of Fourier integral operators \cite[Theorem~25.2.3]{Hoermander4}, the symbol $\operatorname{vol}(G_{d,n})/\lvert\xi\rvert^d$ is supposed to be given by the integration of the product of the amplitudes of $\mathcal{R}_d^\ast$ and $\mathcal{R}_d$ over $e_d$-dimensional spaces. The purpose of this short paper is to observe this fact directly computing $\mathcal{R}_d^\ast\mathcal{R}_d$. 
\par
Recall that $\dim(G_{d,n})=d(n-d)$, $\dim\bigl(G(d,n)\bigr)=(d+1)(n-d)$, and 
$$
T^\ast{G(d,n)}
=
\bigl\{
\bigl(\sigma,x^{\prime\prime},(\eta_1,\dotsc,\eta_d),\xi\bigr) 
: 
\sigma \in G_{d,n},\ 
x^{\prime\prime},\eta_1,\dotsc,\eta_d,\xi\in\sigma^\perp
\bigr\}. 
$$
See \cite[Lemma~3.1]{Chihara1} for $T^\ast{G(d,n)}$. 
We denote $\sigma \in G_{d,n}$ by $\langle\omega_1,\dotsc,\omega_d\rangle_\text{ON}$ if $\sigma$ is spanned by an orthonormal system $\{\omega_1,\dotsc,\omega_d\}$ in $\mathbb{R}^n$. Strictly speaking, Fourier integral operators should be treated as linear operators acting on half-densities, but we deal with them as linear operators acting on functions in this paper for the sake of simplicity. Any problem will not occur. Basic properties of the $d$-plane transform $\mathcal{R}_d$ are the following.
\begin{theorem}[{\cite[Theorem~3.2]{Chihara1}}]
\label{theorem:chihara1} 
The $d$-plane transform $\mathcal{R}_d$ is an elliptic Fourier integral operator whose Schwartz kernel belongs to 
$$
\mathcal{I}^{-d(n-d+1)/4}\bigl(G(d,n)\times\mathbb{R}^n,\Lambda_d^\prime\bigr),
$$
where $\mathcal{I}^{-d(n-d+1)/4}(\dotsb)$ is the standard notation of the class of Lagrangian distributions {\rm (}See {\rm \cite{Hoermander4})}, $\Lambda_d$ is the canonical relation of $\mathcal{R}_d$ given by 
\begin{align*}
  \Lambda_d
& =
  \bigl\{
  \bigl((\sigma,y-\pi_\sigma{y}),\eta(y\cdot\omega_1,\dotsc,y\cdot\omega_d,1);y,\eta\bigr) \in T^\ast{G(d,n)}{\times}T^\ast\mathbb{R}^n : 
\\
& \qquad
  \sigma=\langle\omega_1,\dotsc,\omega_d\rangle_\text{ON} \in G_{d,n}, \ 
  y \in \mathbb{R}^n, \ 
  \eta \in \sigma^\perp\setminus\{0\}
  \bigr\} 
\\
& =
  \bigl\{
  \bigl((\sigma,y-\pi_\sigma{y}),\eta(y\cdot\omega_1,\dotsc,y\cdot\omega_d,1);y,\eta\bigr) \in T^\ast{G(d,n)}{\times}T^\ast\mathbb{R}^n : 
\\
& \qquad
  (y,\eta) \in T^\ast\mathbb{R}^n\setminus0, \ 
  \sigma=\langle\omega_1,\dotsc,\omega_d\rangle_\text{ON} \in G_{d,n}\cap\eta^\perp
  \bigr\} 
\\
& =
  \bigl\{
  \bigl(
  (\sigma,x^{\prime\prime},\xi(t_1,\dotsc,t_d,1);
   x^{\prime\prime}+t_1\omega_1+\dotsb+t_d\omega_d,\xi
  \bigr) 
  \in T^\ast{G(d,n)}{\times}T^\ast\mathbb{R}^n : 
\\
& \qquad
  (\sigma,x^{\prime\prime}) \in G(d,n), \ 
  \sigma=\langle\omega_1,\dotsc,\omega_d\rangle_\text{ON} \in G_{d,n}, \ 
  t_1,\dotsc,t_d \in \mathbb{R}, \ 
  \xi \in \sigma^\perp\setminus\{0\}
  \bigr\},  
\end{align*}
and $\Lambda_d^\prime$ is the twisted Lagrangian of $\Lambda_d$ defined by 
$$
\Lambda_d^\prime
:=
\bigl\{
\bigl((\sigma,x^{\prime\prime}),\Xi;y,-\eta\bigr) 
\in T^\ast{G(d,n)}{\times}T^\ast\mathbb{R}^n : 
\bigl((\sigma,x^{\prime\prime}),\Xi;y,\eta\bigr) \in \Lambda_d 
\bigr\}. 
$$
\end{theorem}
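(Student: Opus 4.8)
The plan is to establish the three displayed descriptions of $\Lambda_d$ by identifying the geometry of the map $f \mapsto \mathcal{R}_d f$ and then reading off the order of the Lagrangian distribution from the dimension count. First I would write $\mathcal{R}_d f(\sigma,x^{\prime\prime})$ as an oscillatory integral. The defining integral $\int_\sigma f(x^\prime + x^{\prime\prime})\,dx^\prime$ is, up to normalization, the restriction of the Fourier transform: if $\sigma = \langle \omega_1,\dots,\omega_d\rangle_{\mathrm{ON}}$ then $\int_\sigma f(x^\prime+x^{\prime\prime})\,dx^\prime = (2\pi)^{-(n-d)}\int_{\sigma^\perp} e^{i x^{\prime\prime}\cdot\eta}\,\hat f(\eta)\,d\eta$ where $\eta$ ranges over $\sigma^\perp$. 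Substituting the definition of $\hat f$ and writing the phase in terms of the coordinates $(t_1,\dots,t_d)$ along $\sigma$ and the $\sigma^\perp$-variable, one obtains a phase function on $G(d,n)\times\mathbb{R}^n$ with the $(n-d)$-dimensional fibre variable $\eta \in \sigma^\perp$. This realizes $\mathcal{R}_d$ as a Fourier integral operator and the stationary-phase/critical-set computation for the phase produces $\Lambda_d$.

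Next I would compute the critical set of that phase to get the canonical relation. The phase is roughly $\Phi = (x^{\prime\prime} - y)\cdot\eta$ with $\eta$ constrained to $\sigma^\perp$ and $y$ the integration variable on $\mathbb{R}^n$; differentiating in $\eta$ forces $x^{\prime\prime} = \pi_{\sigma^\perp} y = y - \pi_\sigma y$, and the base/fibre derivatives in the $G(d,n)$ variables give the covector components $\eta(y\cdot\omega_1,\dots,y\cdot\omega_d,1)$ in the coordinates on $T^\ast G(d,n)$ recorded in \cite[Lemma~3.1]{Chihara1}, while the derivative in $y$ gives the $\mathbb{R}^n$-covector $\eta$. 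This yields the first description of $\Lambda_d$. The second description is a trivial rewriting: demanding $\eta \in \sigma^\perp\setminus\{0\}$ with $\sigma \in G_{d,n}$ is the same as demanding $(y,\eta)\in T^\ast\mathbb{R}^n\setminus 0$ and $\sigma \in G_{d,n}\cap\eta^\perp$, since $\eta\neq 0$ prevents $\eta$ from vanishing. The third description is obtained by the change of variables $y = x^{\prime\prime} + t_1\omega_1 + \dots + t_d\omega_d$, which parametrizes $\mathbb{R}^n$ by $\sigma^\perp \times \sigma$, and by relabeling $\eta$ as $\xi$; here one checks $y\cdot\omega_j = t_j$ because $x^{\prime\prime}\perp\omega_j$, so the covector $\eta(y\cdot\omega_1,\dots,y\cdot\omega_d,1)$ becomes $\xi(t_1,\dots,t_d,1)$. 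Finally, the ellipticity is immediate because the symbol (essentially a constant coming from the Fourier inversion normalization) never vanishes, and the order $-d(n-d+1)/4$ follows from the standard formula: for an FIO given by a nondegenerate phase with $N$ fibre variables acting between manifolds of dimensions $n_X = \dim G(d,n) = (d+1)(n-d)$ and $n_Y = n$, a symbol of order $m$ gives a kernel in $\mathcal I^{m + (n_X+n_Y)/4 - N/2}$; with $N = n-d$, $m = 0$ (or the appropriate normalization constant) this evaluates to $-d(n-d+1)/4$ after arithmetic.

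The main obstacle I expect is bookkeeping rather than conceptual: getting the coordinate description of $T^\ast G(d,n)$ and the induced fibre coordinates on it exactly right, so that the covector components in $\Lambda_d$ come out as the stated $\eta(y\cdot\omega_1,\dots,y\cdot\omega_d,1)$ and not some reshuffled or rescaled version. This requires carefully using the chart on $G(d,n)$ near a fixed $\sigma$ and tracking how the $\sigma$-direction coordinates pair with the affine coordinate $x^{\prime\prime}$; the factor structure $(t_1,\dots,t_d,1)$ reflects that the last slot is dual to the $x^{\prime\prime}$-direction along $\sigma^\perp$ while the first $d$ slots are dual to the Grassmannian directions that move $\sigma$ within $\eta^\perp$. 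A secondary subtlety is justifying that the phase is globally nondegenerate and that the homogeneity in $\eta$ is handled correctly (the integral is conically homogeneous of the right degree), but these are routine once the local picture is fixed. Since the detailed computation is carried out in \cite{Chihara1}, I would present the argument at the level of indicating the oscillatory-integral representation, the critical-point computation, the three equivalent parametrizations, and the order count, citing \cite[Theorem~3.2]{Chihara1} for the full verification.
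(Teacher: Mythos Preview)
The paper does not prove this theorem at all: it is quoted verbatim from \cite[Theorem~3.2]{Chihara1} and used as a black box in the proof of Theorem~\ref{theorem:main}. So there is no ``paper's own proof'' to compare your proposal against. Your sketch is a reasonable outline of how the argument in \cite{Chihara1} presumably proceeds (oscillatory-integral representation via the Fourier slice identity, stationary-phase computation of the canonical relation, dimension count for the order), and you correctly anticipate that the delicate part is the coordinate bookkeeping on $T^\ast G(d,n)$.

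One concrete slip: the order formula you wrote, $\mu = m + (n_X+n_Y)/4 - N/2$, has the wrong signs. H\"ormander's convention is that an amplitude of order $m_{\mathrm{symb}}$ with $N$ phase variables on a manifold of total dimension $n_X+n_Y$ gives a kernel in $\mathcal{I}^\mu$ with $m_{\mathrm{symb}} = \mu + \bigl((n_X+n_Y)-2N\bigr)/4$, i.e.\ $\mu = m_{\mathrm{symb}} - (n_X+n_Y)/4 + N/2$. With $m_{\mathrm{symb}}=0$, $n_X+n_Y=(d+1)(n-d)+n$, $N=n-d$, this gives $\mu=-d(n-d+1)/4$ as claimed; your stated formula would give $+d(n-d+1)/4$. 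This is exactly the kind of bookkeeping error you flagged as the main risk, so it is worth double-checking the sign conventions before writing up.
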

A composition $A:=A_1A_2$ of properly supported Fourier integral operators 
$A_1$ and $A_2$ is studied in H\"ormander's celebrated textbook \cite{Hoermander4}. In general, the phase function $\Phi$ of $A$ is not necessarily nondegenerate, and the degree $e$ of the degeneracy of $\Phi$ is said to be an excess. Let $a_1$, $a_2$ and $a$ be the principal symbols of $A_1$, $A_2$ and $A$ respectively, and let $\Lambda_1$, $\Lambda_2$ and $\Lambda$ be  be the canonical relations of $A_1$, $A_2$ and $A$ respectively. Then $\Lambda=\Lambda_1\circ\Lambda_2$, that is, 
$$
\Lambda
=
\Lambda_1\circ\Lambda_2
:=
\{
(x,\xi;z,\zeta) : 
\exists (y,\eta)\ \text{s.t.}\ 
(x,\xi;y,\eta)\in\Lambda_1, \ 
(y,\eta;z,\zeta)\in\Lambda_2
\}.
$$
Theorem~25.2.3 in \cite{Hoermander4} states that if $\Lambda=\Lambda_1\circ\Lambda_2$ is a clean intersection with excess $e$, then $A=A_1A_2$ is well-defined and some basic properties of the composition $A$ hold. In particular, there exist $e$-dimensional sets $\Lambda(\gamma)$, $\gamma \in \Lambda$ such that $a$ is given by 
\begin{equation}
a=\int_{\Lambda(\gamma)}a_1{\times}a_2. 
\label{equation:a1a2}
\end{equation} 
In this paper we obtain the integrals \eqref{equation:a1a2} for $\mathcal{R}_d^\ast\mathcal{R}_d$ concretely. Set 
$$
\Lambda_d^T
:=
\bigl\{
\bigl(y,\eta,(\sigma,x^{\prime\prime})\bigr) 
: 
\bigl((\sigma,x^{\prime\prime}),y,\eta\bigr)\in\Lambda_d 
\bigr\}.
$$
Let $a_d(x,\xi)$ be the symbol of $\mathcal{R}_d^\ast\mathcal{R}_d$, that is, 
$$
\mathcal{R}_d^\ast\mathcal{R}_df(x)
=
\frac{1}{(2\pi)^n}
\int_{\mathbb{R}^n}
e^{ix\cdot\xi}
a_d(x,\xi)
\hat{f}(\xi)
d\xi.
$$
Actually \eqref{equation:inversion} shows that $a_d(x,\xi)=\operatorname{vol}(G_{d,n})/\lvert\xi\rvert^d$. Let $e_d$ denote the excess of the intersection $\Lambda_d^T\circ\Lambda_d$. Actually it is well-known that 
$$
\Lambda_d^T\circ\Lambda_d
=
\Delta(T^\ast\mathbb{R}^n\setminus0)
:=
\{(y,\eta;y,\eta) : (y,\eta) \in T^\ast\mathbb{R}^n\setminus0\},
$$
which is the canonical relation of pseudodifferential operators on $\mathbb{R}^n$. 
\par
We are interested in the $e_d$-dimensional integrals \eqref{equation:a1a2} for $\mathcal{R}_d$. We now state the main theorem of this paper.  
\begin{theorem}
\label{theorem:main}
$\Lambda_d^T\circ\Lambda_d=\Delta(T^\ast\mathbb{R}^n\setminus0)$ 
is a clean intersection with excess $e_d=d(n-d-1)$. 
$\mathcal{R}_d^\ast\mathcal{R}_d$ is an elliptic pseudodifferential operator of order $-d$ on $\mathbb{R}^n$ with a symbol given 
by an $e_d$-dimensional integral
\begin{align}
  a_d(x,\xi)
& =
  \frac{\operatorname{vol}(G_{d,n})}{\lvert\xi\rvert^d}
  \int_{\bigl(O(n)/(O(d){\times}O(n-d))\bigr)\cap\xi^\perp}
  dk
\label{equation:cgamma}
\\
& =
  \frac{\operatorname{vol}(G_{d,n})}{\lvert\xi\rvert^d}
  \int_{O(n-1)/(O(d){\times}O(n-1-d))}
  dk
  =
  \frac{\operatorname{vol}(G_{d,n})}{\lvert\xi\rvert^d}.
\nonumber
\end{align}
\end{theorem}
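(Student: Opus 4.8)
The plan is to prove the three assertions in turn, extracting the geometry from the explicit canonical relation $\Lambda_d$ of Theorem~\ref{theorem:chihara1} and the analytic content from \eqref{equation:adjoint1} together with \cite[Theorem~25.2.3]{Hoermander4}. First I would recompute $\Lambda_d^T\circ\Lambda_d$ from the third presentation of $\Lambda_d$: to compose one matches the $T^\ast G(d,n)$-components, and comparing the last coordinate of the ``Radon-type'' covector $\xi(t_1,\dotsc,t_d,1)$ forces $\xi=\zeta$, then $t_i=s_i$, then equality of the two base points in $\mathbb{R}^n$, so the composition is exactly $\Delta(T^\ast\mathbb{R}^n\setminus0)$. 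The same matching describes the fibre product $F=\Lambda_d^T\times_{T^\ast G(d,n)}\Lambda_d$: the projection $\Lambda_d\to T^\ast G(d,n)$ forgetting the $\mathbb{R}^n$-factor is injective onto a submanifold $W$, and the analogous projection of $\Lambda_d^T$ has the same image $W$, so $\dim F=\dim\Lambda_d=2n+d(n-d-1)$, and the composition map $F\to\Delta(T^\ast\mathbb{R}^n\setminus0)$ is a fibration whose fibre over $(y,\eta;y,\eta)$ is $\{\sigma\in G_{d,n}:\sigma\subset\eta^\perp\}=G_{d,n}\cap\eta^\perp\cong G_{d,n-1}=O(n-1)/(O(d){\times}O(n-1-d))$; hence the excess is $e_d=\dim F-\dim\Delta(T^\ast\mathbb{R}^n\setminus0)=d(n-d-1)$.

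For the cleanness I would use the structure just found. Here $W\subset T^\ast G(d,n)$ is the embedded submanifold of covectors $(\eta_1,\dotsc,\eta_d,\xi)$ with $\xi\neq0$ and each $\eta_i$ proportional to $\xi$, and both $\Lambda_d$ and $\Lambda_d^T$ project diffeomorphically onto $W$; therefore $F$ is the graph of a smooth map $\Lambda_d^T\to\Lambda_d$, and the required tangent-space equality $TF=T(\Lambda_d^T\times\Lambda_d)\cap T\bigl(T^\ast\mathbb{R}^n\times\Delta(T^\ast G(d,n))\times T^\ast\mathbb{R}^n\bigr)$ then holds because both sides have the same dimension. Alternatively, the whole configuration is equivariant under the Euclidean motion group, so the tangent-space equality may be checked at a single point $(y,\eta)$ by linear algebra in the coordinates of $T^\ast G(d,n)$ recalled before Theorem~\ref{theorem:chihara1}.

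By \cite[Theorem~25.2.3]{Hoermander4} the clean composition with excess $e_d$ makes $\mathcal{R}_d^\ast\mathcal{R}_d$ a Lagrangian distribution associated with $\Delta(T^\ast\mathbb{R}^n\setminus0)$, i.e.\ a pseudodifferential operator on $\mathbb{R}^n$, of order $2\bigl(-d(n-d+1)/4\bigr)+e_d/2=-d$, with symbol given by \eqref{equation:a1a2}. To evaluate that integral I would put $\mathcal{R}_d$ into oscillatory-integral form via the Fourier slice identity,
$$
\mathcal{R}_df(\sigma,x^{\prime\prime})
=
\frac{1}{(2\pi)^{n-d}}
\int_{\sigma^\perp}\int_{\mathbb{R}^n}
e^{i(x^{\prime\prime}-y)\cdot\xi}f(y)\,dy\,d\xi,
$$
whose phase $(x^{\prime\prime}-y)\cdot\xi$ has fibre variable $\xi\in\sigma^\perp$ and amplitude $1$, and read off from \eqref{equation:adjoint1} that $\mathcal{R}_d^\ast$ has amplitude $1$ and the overall constant $\operatorname{vol}(G_{d,n})$. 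Computing the principal symbols $a_1$, $a_2$ — the only nontrivial factors are the half-densities coming from the Hessians of these phases, the Maslov factors being trivial here — and integrating $a_1{\times}a_2$ over the fibre $\Lambda(\gamma)=G_{d,n}\cap\xi^\perp$ with its induced invariant density, the factor $\lvert\xi\rvert^{-d}$ comes from the Jacobian relating the Lebesgue measure $d\xi$ on the varying subspace $\sigma^\perp$ to the ambient covector, the factor $\operatorname{vol}(G_{d,n})$ from \eqref{equation:adjoint1}, and the residual integral $\int_{G_{d,n}\cap\xi^\perp}dk=\int_{O(n-1)/(O(d){\times}O(n-1-d))}dk$ equals $1$ since $dk$ is the normalized invariant measure; this is \eqref{equation:cgamma}. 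Ellipticity is immediate since $a_1{\times}a_2>0$, and the result agrees with Theorem~\ref{theorem:siggi}.

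The step I expect to be the main obstacle is this last evaluation: tracking the half-density factors, the Jacobian and every normalization so that \eqref{equation:a1a2} collapses to exactly $\operatorname{vol}(G_{d,n})/\lvert\xi\rvert^d$ with no stray constant — in particular, identifying the density on $\Lambda(\gamma)$ supplied by H\"ormander's construction with the normalized invariant measure on $G_{d,n}\cap\xi^\perp$, and tracking the power of $\lvert\xi\rvert$. As an independent check one can compute $\mathcal{R}_d^\ast\mathcal{R}_d$ directly from \eqref{equation:adjoint1} as convolution against the rotation-invariant, $(d{-}n)$-homogeneous measure $\operatorname{vol}(G_{d,n})\int_{O(n)/(O(d){\times}O(n-d))}\bigl(\text{Lebesgue on }k{\cdot}\sigma\bigr)dk$, whose Fourier transform must be a constant multiple of $\lvert\xi\rvert^{-d}$ pinned down by Theorem~\ref{theorem:siggi}; comparison with \eqref{equation:a1a2} then yields \eqref{equation:cgamma}.
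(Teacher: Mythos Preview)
Your treatment of the geometry --- computing the composition $\Lambda_d^T\circ\Lambda_d$, the fibre product, the excess $e_d=d(n-d-1)$, and the order $-d$ --- matches the paper's proof essentially step for step, and you go a bit further in addressing cleanness (which the paper simply declares ``well-known'').

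The real divergence is in how you obtain \eqref{equation:cgamma}. You propose to go through H\"ormander's abstract formula \eqref{equation:a1a2}, computing the half-density symbols of $\mathcal{R}_d$ and $\mathcal{R}_d^\ast$ from the oscillatory integrals, tracking the Hessian/Maslov contributions, and identifying the density on $\Lambda(\gamma)$ with the normalized invariant measure on $G_{d,n}\cap\xi^\perp$. You correctly flag this as the main obstacle, and indeed you do not actually carry it out --- the sentence ``the only nontrivial factors are the half-densities\ldots'' is exactly where the work lies and is left unverified. The paper sidesteps this entirely: it computes $\mathcal{R}_d^\ast\mathcal{R}_df(x)$ directly from \eqref{equation:adjoint1}, substitutes the Fourier inversion formula for $f$, integrates over $k\cdot\sigma$ in linear coordinates $t_1,\dotsc,t_d$, rescales each $t_j$ by $\lvert\xi\rvert^{-1}$ to extract the factor $\lvert\xi\rvert^{-d}$, and recognizes $\prod_j\int e^{it_j(k\omega_j)\cdot\xi/\lvert\xi\rvert}dt_j$ as a product of delta functions pinning $k\cdot\sigma\subset\xi^\perp$. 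This is elementary Fourier analysis and delivers the constant exactly, with no half-density bookkeeping.

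Note also that your proposed ``independent check'' appeals to Theorem~\ref{theorem:siggi} to pin down the constant; since the paper explicitly frames Theorem~\ref{theorem:main} as an \emph{alternative} derivation of \eqref{equation:inversion}, that move is circular. The fix is simply to promote your check to the main argument and run it self-containedly, exactly as the paper does.
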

We can also recognize that Theorem~\ref{theorem:main} gives an alternative proof of the inversion formula \eqref{equation:inversion}. 
\begin{proof}[Proof of Theorem~\ref{theorem:main}]
We show only $e_d$, the order $m$ of $\mathcal{R}_d^\ast\mathcal{R}_d$, and \eqref{equation:cgamma}. The other parts are well-known. We first compute $e_d$ and $m$. We have $\dim(\Lambda_d)=(d+1)(n-d)+n$ since $\Lambda_d$ is a conic Lagrangian submanifold of $T^\ast(G(d,n)\times\mathbb{R}^n)$, and we have $\dim(\Lambda_d^T\times\Lambda_d)=2(d+1)(n-d)+2n$. It is easy to see that 
$$
\operatorname{codim}\bigl(T^\ast\mathbb{R}^n\times\Delta(T^\ast{G(d,n)})\times{T^\ast\mathbb{R}^n}\bigr)
=
\dim(T^\ast{G(d,n)})
=
2(d+1)(n-d). 
$$
Set 
$$
E
:=
(\Lambda_d^T\times\Lambda_d)
\cap
\bigl(T^\ast\mathbb{R}^n\times\Delta(T^\ast{G(d,n)})\times{T^\ast\mathbb{R}^n}\bigr)
$$
for short. Using Theorem~\ref{theorem:chihara1}, we deduce that 
\begin{align*}
  E
& =
  \bigl\{
  \bigl(x,\xi;(\sigma,x-\pi_\sigma{x}),\xi(x\cdot\omega_1,\dotsc,x\cdot\omega_d,1);
\\
& \qquad
  (\sigma^\prime,y-\pi_{\sigma^\prime}{y}),\eta(y\cdot\omega^\prime_1,\dotsc,y\cdot\omega_d^\prime,1);y,\eta\bigr) : 
\\
& \qquad
  (x,\xi) \in T^\ast\mathbb{R}^n\setminus0, \ 
  \sigma=\langle\omega_1,\dotsc,\omega_d\rangle_\text{ON} \in G_{d,n}\cap\xi^\perp,
\\
& \qquad
  (y,\eta) \in T^\ast\mathbb{R}^n\setminus0, \ 
  \sigma^\prime=\langle\omega^\prime_1,\dotsc,\omega^\prime_d\rangle_\text{ON} \in G_{d,n}\cap\eta^\perp,
\\
& \qquad
  \sigma=\sigma^\prime,\ x-\pi_\sigma{x}=y-\pi_{\sigma^\prime}y,
\\
& \qquad 
  \xi(x\cdot\omega_1,\dotsc,x\cdot\omega_d,1)
  =
  \eta(y\cdot\omega^\prime_1,\dotsc,y\cdot\omega_d^\prime,1)
  \bigr\}.     
\end{align*}
We reduce the conditions arising in $E$ step by step. 
\begin{itemize}
\item 
Since $\sigma=\sigma^\prime$, we have $x-\pi_\sigma{x}=y-\pi_\sigma{y}$, 
and we may assume $\omega_1=\omega_1^\prime, \dotsc, \omega_d=\omega_d^\prime$.
\item 
We obtain $\xi=\eta$ from the last elements of the both hand sides of 
\begin{equation}
\xi(x\cdot\omega_1,\dotsc,x\cdot\omega_d,1)
=
\eta(y\cdot\omega^\prime_1,\dotsc,y\cdot\omega_d^\prime,1). 
\label{equation:vectors}
\end{equation}
\item 
Since $\xi=\eta\ne0$, \eqref{equation:vectors} implies that 
$x\cdot\omega_1=y\cdot\omega_1, \dotsc, x\cdot\omega_d=y\cdot\omega_d$  
and then $\pi_\sigma{x}=\pi_\sigma{y}$. Hence we deduce $x=y$ since $\pi_\sigma{x}=\pi_\sigma{y}$ and $x-\pi_\sigma{x}=y-\pi_\sigma{y}$. 
\end{itemize}
The conditions in $E$ are reduced to 
$x=y$, $\xi=\eta$ and $\sigma=\sigma^\prime$, and we obtain 
\begin{align*}
  E
& =
  \bigl\{
  \bigl(x,\xi;(\sigma,x-\pi_\sigma{x}),\xi(x\cdot\omega_1,\dotsc,x\cdot\omega_d,1);
\\
& \qquad
  (\sigma,x-\pi_\sigma{x}),\xi(x\cdot\omega_1,\dotsc,x\cdot\omega_d,1);x,\xi\bigr) : 
\\
& \qquad
  (x,\xi) \in T^\ast\mathbb{R}^n\setminus0, \ 
  \sigma=\langle\omega_1,\dotsc,\omega_d\rangle_\text{ON} \in G_{d,n}\cap\xi^\perp
  \bigr\}, 
\end{align*}
\begin{align*}
  \dim(E)
& =
  \dim(T^\ast\mathbb{R}^n\setminus0)
  +
  \dim(G_{d,n}\cap\xi^\perp)
\\
& =
  \dim(T^\ast\mathbb{R}^n)
  +
  \dim(G_{d,n-1})
  =
  2n+d(n-d-1). 
\end{align*}
We can compute 
\begin{align*}
  e_d
& =
  \operatorname{codim}(\Lambda_d^T\times\Lambda_d)
  +
  \operatorname{codim}\bigl(T^\ast\mathbb{R}^n\times\Delta(T^\ast{G(d,n)})\times{T^\ast\mathbb{R}^n}\bigr)
  -
  \operatorname{codim}(E)
\\
& =
  -\dim(\Lambda_d^T\times\Lambda_d)
  +
  \operatorname{codim}\bigl(T^\ast\mathbb{R}^n\times\Delta(T^\ast{G(d,n)})\times{T^\ast\mathbb{R}^n}\bigr)
  +
  \dim(E)
\\
& =
  -
  \{2(d+1)(n-d)+2n\}
  +
  \{2(d+1)(n-d)\}
  +
  \{2n+d(n-d-1)\}
\\
& =
  d(n-d-1),   
\end{align*}
\begin{align*}
  m
& =
  \text{order of $\mathcal{R}_d^\ast$}\ 
  +
  \ \text{order of $\mathcal{R}_d$}\ 
  +
  \frac{e_d}{2}
\\
& =
  2\times\text{order of $\mathcal{R}_d$}\ 
  +
  \frac{e_d}{2}
\\
& =
  -
  \frac{d(n-d+1)}{2}
  +
  \frac{d(n-d-1)}{2}
  =
  -d.
\end{align*}
\par
Finally we obtain \eqref{equation:cgamma}. 
Pick up arbitrary 
$\sigma=\langle\omega_1,\dotsc,\omega_d\rangle_\text{ON} \in G_{d,n}$. 
Then we have 
$k\cdot\sigma=\langle{k\omega_1,\dotsc,k\omega_d}\rangle_\text{ON}$ 
and we can introduce a coordinate system 
$$
k\cdot\sigma
=
\{t_1k\omega+t_dk\omega_d : (t_1,\dotsc,t_d)\in\mathbb{R}^d\}. 
$$
on $k\cdot\sigma$. 
We denote by $\delta_0(s)$ the Dirac measure at $s=0$. 
We deduce that 
\begin{align*}
&  \mathcal{R}_d^\ast\mathcal{R}_df(x)
\\
& =
  \operatorname{vol}(G_{d,n})
  \int_{\mathcal{G}_d}
  \mathcal{R}_df(k\cdot\sigma,x-\pi_{k\cdot\sigma}x)dk
\\
& =
  \operatorname{vol}(G_{d,n})
  \int_{\mathcal{G}_d}
  \int_{k\cdot\sigma}
  f(y^\prime+x-\pi_{k\cdot\sigma}x)
  dy^\prime
  dk
\\
& =
  \frac{\operatorname{vol}(G_{d,n})}{(2\pi)^n}
  \int_{\mathcal{G}_d}
  \int_{k\cdot\sigma}
  \int_{\mathbb{R}^n}
  e^{i(y^\prime+x-\pi_{k\cdot\sigma}x)\cdot\xi}
  \hat{f}(\xi)
  d\xi
  dy^\prime
  dk
\\
& =
  \frac{\operatorname{vol}(G_{d,n})}{(2\pi)^n}
  \int_{\mathbb{R}^n}
  \left(
  \int_{\mathcal{G}_d}
  \int_{k\cdot\sigma}
  e^{i(y^\prime+x-\pi_{k\cdot\sigma}x)\cdot\xi}
  dy^\prime
  dk
  \right)
  \hat{f}(\xi)
  d\xi
\\
& =
  \frac{\operatorname{vol}(G_{d,n})}{(2\pi)^n}
  \int_{\mathbb{R}^n}
  \left(
  \int_{\mathcal{G}_d}
  \int_{k\cdot\sigma}
  e^{i(y^\prime+x)\cdot\xi}
  dy^\prime
  dk
  \right)
  \hat{f}(\xi)
  d\xi
\\
& =
  \frac{\operatorname{vol}(G_{d,n})}{(2\pi)^n}
  \int_{\mathbb{R}^n}
  \left(
  \int_{\mathcal{G}_d}
  \int_\mathbb{R}
  \dotsb
  \int_\mathbb{R}
  e^{i(t_1k\omega_1+\dotsb+t_dk\omega_d+x)\cdot\xi}
  dt_1{\dotsb}dt_d
  dk
  \right)
  \hat{f}(\xi)
  d\xi
\\
& =
  \frac{\operatorname{vol}(G_{d,n})}{(2\pi)^n}
  \int_{\mathbb{R}^n}
  e^{ix\cdot\xi}
  \left\{
  \int_{\mathcal{G}_d}
  \left(
  \prod_{j=l}^d
  \int_\mathbb{R}
  e^{it_j(k\omega_j)\cdot\xi}
  dt_j
  \right)
  dk
  \right\}
  \hat{f}(\xi)
  d\xi
\\
& =
  \frac{\operatorname{vol}(G_{d,n})}{(2\pi)^n}
  \int_{\mathbb{R}^n}
  e^{ix\cdot\xi}
  \frac{1}{\lvert\xi\rvert^d}
  \left\{
  \int_{\mathcal{G}_d}
  \left(
  \prod_{j=l}^d
  \int_\mathbb{R}
  e^{it_j(k\omega_j)\cdot\xi/\lvert\xi\rvert}
  dt_j
  \right)
  dk
  \right\}
  \hat{f}(\xi)
  d\xi
\\
& =
  \frac{\operatorname{vol}(G_{d,n})}{(2\pi)^n}
  \int_{\mathbb{R}^n}
  e^{ix\cdot\xi}
  \frac{1}{\lvert\xi\rvert^d}
  \left(
  \int_{\mathcal{G}_d}
  \bigotimes_{j=l}^d
  \delta_0\bigl((k\omega_j)\cdot\xi/\lvert\xi\rvert\bigr)
  dk
  \right)
  \hat{f}(\xi)
  d\xi
\\
& =
  \frac{\operatorname{vol}(G_{d,n})}{(2\pi)^n}
  \int_{\mathbb{R}^n}
  e^{ix\cdot\xi}
  \frac{1}{\lvert\xi\rvert^d}
  \left(
  \int_{\mathcal{G}_d\cap\xi^\perp}
  dk
  \right)
  \hat{f}(\xi)
  d\xi,
\end{align*}
which proves \eqref{equation:cgamma}. 
This completes the proof. 
\end{proof}
%
%
%
%

\end{document}